\newtheorem{theorem}{Theorem}
\newtheorem{lemma}[theorem]{Lemma}
\newtheorem{proposition}[theorem]{Proposition}
\newcommand{\R}{\mathbf{R}}
\renewcommand{\S}{\mathbf{S}}
\title{Non-proper helicoid-like limits of closed minimal surfaces in
  $3$-manifolds}
\author{Maria Calle}
\address{Max Planck Institute for Gravitational Physics\\ Albert Einstein Institute\\ Am M\"{u}hlenberg 1\\ D-14467 Golm\\ Germany.}
\email{maria.calle@aei.mpg.de}
 \author{Darren Lee}
\address{Courant Institute of Mathematical
  Sciences\\ 251 Mercer Street\\ New York, NY
  10012.}
\email{leed@cims.nyu.edu}
\begin{document}

\begin{abstract}
We show that there exists a metric with positive scalar curvature
on $\mathbf{S}^2\times\mathbf{S}^1$ and a sequence of embedded
minimal cylinders that converges to a minimal lamination that, in
a neighborhood of a strictly stable $2$-sphere, is smooth except
at two helicoid-like singularities on the $2$-sphere.  The
construction is inspired by a recent example by D. Hoffman and B.
White.
\end{abstract}

\maketitle

\section{Introduction}

Roughly speaking, it is expected that the only two types of singular
laminations that can occur as limits of sequences of closed embedded minimal
surfaces in a $3$-manifold with positive scalar curvature are
accumulations of catenoids
and non-proper helicoid-like limits.  Recall that a lamination is a
foliation that does not necessarily fill the entire space; in particular,
just like in a foliation, the leaves of a lamination must be locally
parallel graphs.  An example of the first type of limit was constructed by
Colding and De Lellis in \cite{colding-delellis}.  Prior to the
construction given here, no non-proper helicoid-like limits were known to
exist as limits of closed surfaces.  We construct such a limit in
$\S^2\times\S^1$ where two helicoid-like singularities lie on a strictly
stable $2$-sphere.

For closed Riemannian manifolds with positive Ricci curvature, combining the
work of Choi-Wang \cite{choi-wang} and Yang-Yau \cite{yang-yau}
gives an area bound for embedded minimal surfaces that depends only on the
lower bound for the Ricci curvature of the manifold and on the genus of the
surface.  In particular,
constructions such as those in \cite{colding-delellis} and in this paper
are not possible in manifolds of positive Ricci curvature as both of these
constructions require sequences of closed minimal surfaces with unbounded
area.  See also \cite{choi-schoen}, where Choi and Schoen proved that any
limit of closed minimal surfaces in a closed $3$-manifold with positive
Ricci curvature must be smooth.

An immersed surface $\Sigma\subset M$ is said to be proper if the
intersection of $\Sigma$ with any compact subset of $M$ is compact.
Similarly, a lamination is proper if each leaf is proper.  The study of
the properness of minimal surfaces in Euclidean $3$-space has a long
history; see in particular the work on the Calabi-Yau conjectures
\cite{alarcon-ferrer-martin}, \cite{colding-minicozzi5},
\cite{collin-rosenberg}, \cite{jorge-xavier}, \cite{martin-morales}, and
\cite{nadirashvili}.  For limit laminations in Euclidean $3$-space, many
results about properness are known.  In particular, Colding and Minicozzi
showed in \cite{colding-minicozzi6}, \cite{colding-minicozzi7},
\cite{colding-minicozzi8}, and \cite{colding-minicozzi3} that any sequence
of embedded minimal disks in balls of increasing, unbounded radius in
$\mathbf{R}^3$ must converge off of a curve to a foliation of planes, and
so non-proper limits are impossible.  However, in
\cite{colding-minicozzi2}, Colding and Minicozzi construct a sequence of
embedded minimal disks in a fixed ball in $\mathbf{R}^3$ with boundaries
in the boundary of the ball that converges to a limit which is not proper;
see also \cite{meeks} and \cite{meeks2}.

Our construction is inspired by a beautiful variational construction by D.
Hoffman and B. White of the genus one helicoid \cite{hoffman-white};
see also the earlier known constructions of the genus one helicoid
\cite{hoffman-weber-wolf1} and \cite{hoffman-weber-wolf2}.

We consider the case of a manifold which is topologically
$\mathbf{S}^2\times\mathbf{S}^1$, for which we have the following theorem:

\begin{theorem}\label{thm1}
There exists a metric with positive scalar curvature on
$M=\mathbf{S}^2\times\mathbf{S}^1$ and a sequence of embedded
minimal cylinders $\{\Sigma_n\}$ with boundary in an unstable
$2$-sphere that, in a neighborhood of a strictly stable $2$-sphere
$\Gamma$, converges to a minimal lamination that is smooth except
at two helicoid-like singularities on $\Gamma$.

Specifically, in a neighborhood $\Omega$ of the strictly stable
$2$-sphere, $\Sigma_n\cap\Omega\backslash\Gamma$ converges to two
non-properly embedded minimal surfaces $\Sigma^{\pm}$, one on each
side of $\Gamma$, with
$\overline{\Sigma^{\pm}}\backslash\Sigma^\pm = \Gamma$.
 Furthermore, away from two axes, $\Sigma^\pm$ is the union of two
embedded minimal disks $\Sigma^{\{A,B\}\pm}$, each of which spirals
into $\Gamma$.  (See Figure~\ref{picG}.)
\end{theorem}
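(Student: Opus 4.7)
The plan is to mimic the Hoffman--White variational construction of the genus-one helicoid \cite{hoffman-white}, adapted to the compact setting of $\S^2\times\S^1$. First, construct the ambient metric as a warped product $g = dt^2 + f(t)^2 g_{\S^2}$ on $\S^2\times(\R/\mathbb{Z})$, where $f$ is a smooth positive function with a nondegenerate maximum at $t_0$ and a nondegenerate minimum at $t_1$. A direct check of the Jacobi operator on a totally geodesic cross-section (which reduces to $-\Delta + 2f''/f$) shows that $\Gamma_0 := \S^2\times\{t_0\}$ is unstable while $\Gamma := \S^2\times\{t_1\}$ is strictly stable. The scalar curvature formula $R = 2/f^2 - 4f''/f - 2(f')^2/f^2$ gives $R>0$ provided $f$ has sufficiently small oscillation.

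Next, set up the variational problem. Fix two antipodal points $p_A,p_B\in \S^2$ and let $\gamma_i := \{p_i\}\times J$ for $i\in\{A,B\}$, where $J\subset\S^1$ is a closed arc whose interior contains $t_1$ and whose endpoints lie on $\Gamma_0$. For each positive integer $n$, let $\mathcal{C}_n$ be the class of piecewise smooth embedded cylinders in $M$ that contain $\gamma_A\cup\gamma_B$, have both boundary circles on $\Gamma_0$, and wind $n$ times around the axes in a topological sense (measured by intersection numbers of $\Sigma\setminus(\gamma_A\cup\gamma_B)$ with small meridian disks transverse to the axes). Geometric measure theory, applied in the $O(2)\times\mathbb{Z}_2$-equivariant setting coming from rotating about the axis through $p_A,p_B$ and swapping $p_A\leftrightarrow p_B$, produces an area-minimizer $\Sigma_n\in\mathcal{C}_n$; standard interior regularity together with boundary regularity along the fixed axes in the style of \cite{hoffman-white} shows that $\Sigma_n$ is a smooth embedded minimal cylinder.

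Finally, analyze the limit $n\to\infty$. Away from the axes, the $\Sigma_n$ are stable minimal surfaces on any fixed subdomain, and hence enjoy uniform curvature and area estimates; passing to a subsequence yields a lamination $\mathcal{L}$ in $\Omega\setminus(\gamma_A\cup\gamma_B)$. The equivariant symmetry preserved under minimization and a blow-up analysis at each point of $\gamma_i\cap\Gamma$ identify the two singularities of $\mathcal{L}$ as helicoidal. Strict stability of $\Gamma$ forces $\Gamma$ to appear as a leaf of $\mathcal{L}$ and prevents sheets from the two sides from joining across it, producing two non-properly embedded minimal surfaces $\Sigma^\pm$ with $\overline{\Sigma^\pm}\setminus\Sigma^\pm = \Gamma$; the discrete symmetry between the two axes splits each $\Sigma^\pm$ away from the axes into the two embedded disks $\Sigma^{\{A,B\}\pm}$.

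The main obstacle is ensuring that the sheets of $\mathcal{L}$ genuinely accumulate on $\Gamma$ rather than either crossing it or collapsing onto it. The winding number $n$ is what forces enough sheets to develop near $\Gamma$ (providing an effective \emph{lower} area bound for members of $\mathcal{C}_n$ in any fixed neighborhood of $\Gamma$), while strict stability is precisely the tool that keeps these sheets from merging across $\Gamma$ or vanishing into it. Calibrating these two effects, together with a careful choice of the variational class so that minimizers enjoy both upper area bounds (for compactness) and the aforementioned lower bounds (so the limit is nontrivial in $\Omega\setminus\Gamma$), is the delicate core of the argument.
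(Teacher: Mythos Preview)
Your proposal follows the same overall strategy as the paper---a warped-product metric, a Hoffman--White style variational construction of helicoidal cylinders winding around two antipodal axes, and a limit analysis using the strict stability of $\Gamma$---but the technical route to producing the $\Sigma_n$ is genuinely different, and the difference hides a gap.

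The paper does \emph{not} minimize directly in a class like your $\mathcal{C}_n$. Instead it passes to the universal cover $\Omega'$ of $\Omega\setminus(\gamma_1\cup\gamma_2)$, so that ``winding $n$ times'' becomes ordinary boundary data for a Plateau problem (the boundary circles sit at $\theta=\pm n\pi$). Because the warped-product metric degenerates along the axes, the paper further excises $\epsilon$-tubes around $\gamma_1,\gamma_2$, smoothly caps them to obtain a genuine mean-convex manifold $N_{n,\epsilon}$, and then invokes Meeks--Yau to get an area-minimizing annulus $\Sigma''_{n,\epsilon}$. A translation argument (Lemma~\ref{intersection-lemma}) shows these annuli are disjoint from their own $\theta$-translates, which yields embeddedness after projecting back to $\Omega$. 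Finally one lets $\epsilon\to 0$, uses Hardt--Simon boundary regularity at the axes, and Schwarz-reflects across $\gamma_1\cup\gamma_2$ to obtain the full cylinders $\Sigma_n$. The stability and curvature bounds you need for the limit then come for free, since each half is a piece of an honest area-minimizer.

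Your direct equivariant-GMT minimization in $\mathcal{C}_n$ skips exactly the step the paper works hardest on. The class $\mathcal{C}_n$ is defined by a topological winding count, and you have not explained why this count is preserved under the varifold or current limits that GMT compactness gives you; a minimizing sequence could in principle shed sheets or pinch along the axes so that the limit lies in $\mathcal{C}_m$ for some $m<n$, or fails to be an embedded cylinder at all. The universal-cover trick is precisely what converts this fragile topological constraint into rigid boundary data. Relatedly, your assertion that ``away from the axes the $\Sigma_n$ are stable'' needs the surfaces to be locally area-minimizing, which follows immediately in the paper's setup but is not automatic for a minimizer in a constrained isotopy class. Finally, the paper does not actually carry out a blow-up to identify the tangent cone as a helicoid; it only shows the limit lamination consists of $\Gamma$ together with two non-proper leaves spiraling into it, and calls this ``helicoid-like''. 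Your blow-up claim is stronger than what is established.
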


\begin{figure}[ht]
\begin{center}
\includegraphics{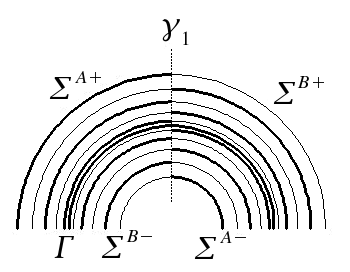}
\caption{\small\sffamily A schematic cross-section near one of the
helicoid axes of the limit lamination in Theorem~\ref{thm1}.  The
limit consists of the strictly stable $2$-sphere $\Gamma$ and four
embedded minimal disks joined along the central axis $\gamma_1$.
$\Sigma^{A+}$ and $\Sigma^{B+}$ lie on one side of $\Gamma$ and
$\Sigma^{A-}$ and $\Sigma^{B-}$ lie on the other side.  Each of
the four spirals infinitely into $\Gamma$.  These surfaces connect
to a symmetric configuration around a second axis $\gamma_2$.}
\label{picG}
\end{center}
\end{figure}

The metric in Theorem~\ref{thm1} will be a warped product metric
with particular assumptions about the warping factor.  If we
instead use the product metric on $\S^2\times\S^1$ (where each
$\S^2\times\{z\}$ is minimal but not strictly stable), the same
construction will give us a sequence of embedded minimal cylinders that
converges smoothly away from two circles to the {\it smooth}
foliation of $\S^2\times\S^1$ by the parallel minimal spheres
$\S^2\times\{z\}$.  Each surface in this sequence looks like it is
obtained by gluing together two oppositely oriented helicoids.
 Each helicoid has a circular axis $\{x\}\times \S^1$ and
$\{-x\}\times \S^1$, which are orthogonal to each $2$-sphere
$\S^2\times \{z\}$ and are ``antipodal'' (they go through $x$ and
$-x$, respectively).

\begin{figure}[ht]
 \begin{center}
 \includegraphics[width=2.5in]{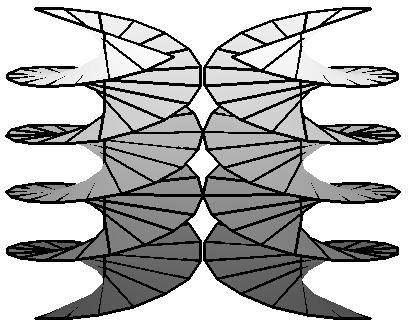}
 \includegraphics[width=2in]{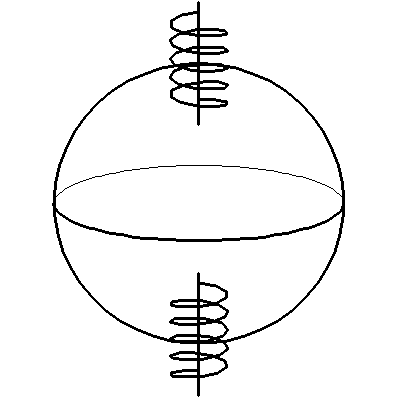}
 \caption[Two helicoids spiraling into a sphere]{\small\sffamily On the left, two helicoids with opposite
 orientation glued together.  On the right, two helicoids spiral around the
 polar axes of $\mathbf{S}^2$.}\label{picAB}
\end{center}
\end{figure}

In section \ref{sect2} we will construct the sequence of minimal
surfaces $\Sigma_n$.  The surfaces in our sequence will be
embedded minimal cylinders that spiral an increasing number of
times around two polar axes $\gamma_1$ and $\gamma_2$ of the form
$\{x\}\times \S^1$.  They spiral around the axes in opposite
directions with the corresponding layers connected.  (See Figure
\ref{picAB}).  In order to find these surfaces, we wish to solve
Plateau problems in the universal cover $\Omega'$ of
$\Omega\backslash(\gamma_1\cup\gamma_2)$, where $\Omega$ is a
neighborhood of a strictly stable $2$-sphere $\Gamma$.

After constructing the sequence, in section \ref{sect3} we will
take the limit when $n\rightarrow\infty$ and obtain a minimal
lamination.  By choosing a sufficiently small neighborhood of the
strictly stable $2$-sphere, we can assume that $\Gamma$ is the
only closed minimal surface in $\Omega$.  Thus, this lamination
must consist of the strictly stable $2$-sphere and two minimal
surfaces, one on each side, that spiral into it.  In particular,
these two minimal surfaces will be embedded but not proper.

We would like to thank our adviser Tobias Colding for his many
helpful suggestions.  We are also thankful to the referee for the many comments that have greatly improved the exposition of this paper.

\section{Preliminaries}\label{sect1}

We parametrize $M=\mathbf{S}^2\times\mathbf{S}^1$ by $(\phi,\theta,z)\in[0,\pi]\times[0,2\pi)\times(-\pi,\pi]$, where $(\phi,\theta)$ are latitudinal and longitudinal coordinates on
$\mathbf{S}^2$ and $z\in (-\pi,\pi]\sim\mathbf{S}^1$.  We consider a warped product metric in $M$ given by $$g^2 = \omega^2(z)(d\phi^2 +
\sin^2\phi\; d\theta^2)+dz^2.$$ We will
assume that $\omega$ is a smooth function $\mathbf{S}^1\to\R$ such
that:

\begin{enumerate}
 \item $\omega(\pi)>\omega(0)=1$,
 \item $\omega(z) = \omega(-z)$,
 \item $\omega''(0)>0$, and
 \item $z=0,\pi$ are the only
critical points of $\omega$.
\end{enumerate}

With these assumptions, $\Gamma=\{z=0\}$ is a strictly stable
minimal $2$-sphere and $\{z=\pi\}$ is an unstable minimal
$2$-sphere.  We will be working in the neighborhood
$\Omega=M\backslash\{z=\pi\}$ of the strictly stable $2$-sphere $\Gamma$.
 Note that the boundary $\partial\Omega=\S^2\times\{\pi,-\pi\}$ is
minimal and hence weakly mean convex.  Recall that $\partial\Omega$ is weakly mean convex if its mean curvature is $H\ge 0$ with respect to the unit normal vector pointing towards $\Omega$.  Also, $\Gamma$ is the only
closed minimal surface in $\Omega$:

\begin{lemma}\label{no-closed-minimal-surfaces}
 If $\Sigma$ is a closed minimal surface in $M$ away from $\{z=\pi\}$, then $\Sigma=\Gamma$.
\end{lemma}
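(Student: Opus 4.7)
The approach is to apply the maximum principle to the height function $z|_\Sigma$. Since $\Sigma$ is closed, $z|_\Sigma$ attains its maximum $z_{\max}$ and minimum $z_{\min}$, with $-\pi < z_{\min}\le z_{\max} < \pi$. If I can rule out $z_{\max}>0$ and $z_{\min}<0$, then $\Sigma\subseteq\{z=0\}=\Gamma$; since $\Gamma$ is a connected $2$-sphere and $\Sigma$ is a closed $2$-submanifold of $\Gamma$, each component of $\Sigma$ will be both open and closed in $\Gamma$, forcing $\Sigma=\Gamma$.

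The key ingredient is a formula for $\Delta_\Sigma z$ when $\Sigma$ is minimal. Using the warped product metric, a direct calculation with Christoffel symbols gives $\nabla_{\partial_\phi}\partial_z=(\omega'/\omega)\partial_\phi$, $\nabla_{\partial_\theta}\partial_z=(\omega'/\omega)\partial_\theta$, and $\nabla_{\partial_z}\partial_z=0$, so the ambient Hessian $\nabla^2 z$ is diagonal in the orthonormal frame $(\omega^{-1}\partial_\phi,\,(\omega\sin\phi)^{-1}\partial_\theta,\,\partial_z)$ with eigenvalues $(\omega'/\omega,\,\omega'/\omega,\,0)$. Combining the identity $\Delta_\Sigma z = \operatorname{tr}_{T\Sigma}\nabla^2 z + \langle H_\Sigma,\nabla z\rangle$ with minimality ($H_\Sigma = 0$) and writing $\operatorname{tr}_{T\Sigma}\nabla^2 z = \operatorname{tr}_M\nabla^2 z - \nabla^2 z(N,N)$ for a unit normal $N$, I expect to obtain
$$\Delta_\Sigma z \;=\; \frac{\omega'(z)}{\omega(z)}\bigl(1+\langle N,\partial_z\rangle^2\bigr).$$

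With this formula in hand, the maximum principle finishes the argument. If $z_{\max}>0$, assumptions (i), (ii), (iv) on $\omega$ imply $\omega$ is strictly increasing on $(0,\pi)$, so $\omega'(z_{\max})>0$ and the formula gives $\Delta_\Sigma z>0$ at the maximizing point; this contradicts $\Delta_\Sigma z\le 0$ at an interior maximum. Hence $z_{\max}\le 0$. By the symmetry $\omega(-z)=\omega(z)$ one has $\omega' < 0$ on $(-\pi,0)$, and the symmetric argument at the minimum gives $z_{\min}\ge 0$. Therefore $z\equiv 0$ on $\Sigma$, and $\Sigma=\Gamma$ by the topological observation from the first paragraph. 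There is no serious obstacle here; the only step requiring care is the Hessian calculation for the warped product metric, which is a short but standard exercise.
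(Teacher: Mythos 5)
Your proof is correct; I checked the Hessian computation and the formula $\Delta_\Sigma z = \frac{\omega'(z)}{\omega(z)}\bigl(1+\langle N,\partial_z\rangle^2\bigr)$ does hold for minimal $\Sigma$ in this warped product, and assumptions (i)--(iv) on $\omega$ do force $\omega'>0$ on $(0,\pi)$ and $\omega'<0$ on $(-\pi,0)$, so the maximum-principle step goes through. The underlying idea is the same as the paper's --- both arguments exploit that the level sets $\{z=a\}$, $a\neq 0,\pi$, form a family of barriers curving toward $\Gamma$ --- but the implementation differs. The paper slides the sphere $\Gamma_a=\{z=a\}$ down until it first touches $\Sigma$, computes that $\Gamma_a$ is strictly mean convex (mean curvature $2\omega'(a)/\omega(a)$ toward $\Gamma$, essentially the same divergence computation as your trace of $\nabla^2 z$), and invokes the Solomon--White strong maximum principle for the tangential one-sided contact. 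You instead push the barrier computation onto $\Sigma$ itself, obtaining strict superharmonicity of $z|_\Sigma$ where $z>0$ (resp.\ subharmonicity where $z<0$) and applying the classical interior maximum principle at the extremum of $z$. Your route is self-contained and avoids citing Solomon--White, at the cost of a Christoffel-symbol computation and of requiring $\Sigma$ to be smooth (automatic here); the paper's route is softer and, because Solomon--White is stated for stationary varifolds, would survive weaker regularity hypotheses on $\Sigma$. Both are complete proofs of the lemma as stated.
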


In order to prove this lemma, we will need the following maximum
principle of Solomon and White (see \cite{solomon-white}):

\begin{theorem}\label{maximum-principle}
Let $\Omega$ be open and $\Sigma_1$ and $\Sigma_2$ be connected
minimal surfaces in $\Omega$, and suppose that $\Sigma_1$ is
smooth.  Further suppose that $\Sigma_1$ lies weakly on one side of
$\Sigma_2$ (that is, $\Sigma_1$ is contained in the closure of one of the
components of $\Omega\backslash\Sigma_2$).  Then the two surfaces either
coincide (ie, are either the same surface or are each subsets of a single
larger minimal surface) or are disjoint.

The conclusion also holds when $\Sigma_2$ is mean-convex, that is,
its mean curvature vector points (where it is not $0$) into the
component of $\Omega\backslash\Sigma_2$ containing $\Sigma_1$.
\end{theorem}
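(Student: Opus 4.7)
The plan is to use the warped-product structure to produce mean-convex barriers at each height and then apply the Solomon–White maximum principle. The key observation is that, under assumptions (i)–(iv), $\omega$ attains its minimum at $z=0$ and its maximum at $z=\pi$, and is strictly monotonic in $|z|$ on $(-\pi,\pi)$. Consequently the coordinate sphere $\{z=c\}$ has area $4\pi\omega^2(c)$, so for every $c\in(-\pi,\pi)\setminus\{0\}$ its mean curvature vector points strictly toward $\Gamma=\{z=0\}$, and in particular $\{z=c\}$ is a smooth mean-convex barrier that is \emph{not} minimal.

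First I would let $\Sigma$ be a closed minimal surface contained in $M\setminus\{z=\pi\}$ and set
\[
z_{\max}=\max_{\Sigma}z,\qquad z_{\min}=\min_{\Sigma}z.
\]
By compactness of $\Sigma$ and since $\Sigma$ avoids $\{z=\pi\}$, these values lie in $(-\pi,\pi)$. I want to show $z_{\max}\le 0$ and $z_{\min}\ge 0$, which forces $\Sigma\subset\Gamma$; closedness and connectedness of $\Gamma$ then give $\Sigma=\Gamma$.

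Suppose, for contradiction, that $z_{\max}>0$, and set $S=\{z=z_{\max}\}$. Since $z_{\max}\in(0,\pi)$ and the only critical points of $\omega$ are $0$ and $\pi$, we have $\omega'(z_{\max})>0$, so $S$ is smooth and mean-convex with mean curvature vector pointing into the component $\{z<z_{\max}\}$ of $M\setminus S$, which is precisely the component containing $\Sigma$. By definition of $z_{\max}$, $\Sigma$ lies weakly on that side of $S$ and is tangent to $S$ at any point realizing the maximum of $z|_\Sigma$. Applying Theorem~\ref{maximum-principle} with $\Sigma_1=\Sigma$ (smooth minimal) and $\Sigma_2=S$ (mean-convex), I conclude that $\Sigma$ and $S$ either coincide or are disjoint. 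They are not disjoint, so they coincide; but $S$ is not minimal while $\Sigma$ is, a contradiction. Hence $z_{\max}\le 0$.

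The symmetric argument with $\{z=z_{\min}\}$, using that $\omega'(z_{\min})<0$ and that the mean curvature vector of this slice also points toward $\Gamma$, gives $z_{\min}\ge 0$. Therefore $\Sigma\subset\{z=0\}=\Gamma$, and since $\Gamma$ is a connected closed surface, $\Sigma=\Gamma$. The only real subtlety is verifying that in the mean-convex case of Theorem~\ref{maximum-principle} the two surfaces cannot ``coincide'' in the generalized sense of being subsets of a common larger minimal surface: this is ruled out because $S$ itself has non-zero mean curvature at the tangency point, so no minimal surface can contain an open neighborhood of that point of $S$.
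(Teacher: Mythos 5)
Your proposal does not prove the statement in question; it proves a different one. The statement posed is the Solomon--White strong maximum principle itself (Theorem~\ref{maximum-principle}), a general touching principle for minimal surfaces which the paper does not prove but quotes from \cite{solomon-white}. What you have written is, in substance, the paper's proof of Lemma~\ref{no-closed-minimal-surfaces} (that $\Gamma$ is the only closed minimal surface in $M\setminus\{z=\pi\}$): you construct the mean-convex slices $\{z=c\}$, take the extremal slice touching $\Sigma$, and then \emph{invoke} Theorem~\ref{maximum-principle} to get a contradiction. As a proof of Theorem~\ref{maximum-principle} this is circular --- the theorem appears as a black box inside its own purported proof --- and nothing in your argument addresses the actual content of the theorem, namely why two minimal surfaces (or a minimal surface and a mean-convex barrier) that touch, with one lying weakly on one side of the other, must coincide.

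A genuine proof would have to work locally at a touching point $p$: write $\Sigma_1$ and $\Sigma_2$ near $p$ as graphs $u_1,u_2$ over the common tangent plane, observe that $u_1-u_2$ has a sign, vanishes at $p$, and satisfies a linear uniformly elliptic equation (obtained by differencing the minimal surface equation; in the mean-convex case one instead gets a differential inequality of the correct sign), then apply the Hopf strong maximum principle to conclude $u_1\equiv u_2$ near $p$, and finally propagate by connectedness. The point of the Solomon--White paper is precisely that $\Sigma_2$ need not be smooth (it is only a stationary or mean-convex varifold), so the graphical reduction is unavailable and the argument must be run at the varifold level; none of this appears in your write-up. For what it is worth, your argument \emph{for Lemma~\ref{no-closed-minimal-surfaces}} is sound and essentially matches the paper's proof of that lemma, with the minor variant that you justify mean convexity of the slices $\{z=c\}$ by monotonicity of their areas $4\pi\omega^2(c)$ rather than by the divergence computation; but that is not the statement you were asked to prove.
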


\begin{proof} [Proof of Lemma~\ref{no-closed-minimal-surfaces}]
First, we observe that each surface $\Gamma_a=\{z=a\}$ with $a\neq
0,\pi$ is strictly mean convex, with mean curvature vector
pointing towards the strictly stable $2$-sphere $\Gamma$.  If
$0<a<\pi$, consider the unit normal vector
$\vec{n}_{\Gamma_a}=-\frac{\partial}{\partial z}$ pointing towards
$\Gamma$.  We can extend this vector to a vector field
$X=-\frac{\partial}{\partial z}$ defined on $M$, and then we can
compute the mean curvature of $\Gamma_a$ with respect to $\vec{n}_{\Gamma_a}$ as:
 \begin{eqnarray}
  H_{\Gamma_a} & = & -\frac{1}{2}\text{div}_MX \nonumber \\
   & = & \left.-\frac{1}{2}\frac{\partial}{\partial z}\right|_{z=a}(-\ln({\text{det}g_{\epsilon}})) \nonumber \\
   & = & \left.\frac{1}{2}\frac{\partial}{\partial z}\right|_{z=a}(\ln({\omega^4(z)\alpha^2(\phi)})) \nonumber \\
   & = & 2\frac{\omega'(a)}{\omega(a)}>0. \nonumber
 \end{eqnarray}
If $-\pi<a<0$, the same argument applied to the unit normal vector
$\vec{n}_{\Gamma_a}=\frac{\partial}{\partial z}$ shows that the
surface is mean convex.

Suppose now that $\Sigma$ is a closed minimal surfaces away from
$\{z=\pi\}$ and different from $\Gamma$.  Then there is a value
$0<a<\pi$ such that $\Sigma\subset\{-a\le z\le a\}$ and either
$\Sigma\cap\Gamma_a\neq\emptyset$ or
$\Sigma\cap\Gamma_{-a}\neq\emptyset$.  But this contradicts the
maximum principle (Theorem~\ref{maximum-principle}), as
$\Gamma_{\pm a}$ is strictly mean convex.
\end{proof}

The surfaces in our sequence will be embedded minimal cylinders
with boundary in $\{z=\pi\}$ that spiral an increasing number of
times around the polar axes $\gamma_1=\{\phi=0\}$ and
$\gamma_2=\{\phi=\pi\}$.  In order to find these surfaces, we wish
to solve Plateau problems in the universal cover $\Omega'$ of
$\Omega\backslash(\gamma_1\cup\gamma_2)$.  In particular, we will
use the following theorem of Meeks and Yau (Thm 5 of
\cite{meeks-yau}):

\begin{theorem}\label{plateau-existence-theorem}
Let $M$ be a compact Riemannian $3$-manifold, and suppose that $M$
can be isometrically embedded in a larger manifold $\tilde M$ such
that $\partial M$ is a $2$-dimensional subcomplex of $\tilde M$
consisting of smooth $2$-dimensional simplexes
$\{\Lambda_1,\ldots,\Lambda_k\}$ with the following properties:
\begin{enumerate}
\item Each $\Lambda_i$ is a smooth surface in $\tilde M$ whose
mean curvature is
  non-negative with respect to the outward normal.
\item Each surface $\Lambda_i$ is a compact subset of some smooth
surface
  $\tilde \Lambda_i\subset\tilde M$ where $\tilde \Lambda_i\cap M = \Lambda_i$ and $\partial
  \tilde \Lambda_i\subset\partial\tilde M$.
\end{enumerate}

Let $N$ be a compact subdomain of $\partial M$ such that each
homotopically nontrivial closed curve in $N$ is also homotopically
nontrivial in $M$.  Then there is a stable minimal embedding
$f:N\to M$ so that $f(\partial N)=\partial N$.
\end{theorem}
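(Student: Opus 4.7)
The strategy is variational: obtain $f$ as an area (or Dirichlet energy) minimizer in the relative homotopy class of the inclusion $N \hookrightarrow M$, with boundary condition $f|_{\partial N} = \mathrm{id}_{\partial N}$, and then upgrade smoothness of the minimizer to embeddedness via a Meeks--Yau cut-and-paste exchange. The incompressibility hypothesis --- that homotopically nontrivial loops in $N$ remain nontrivial in $M$ --- is precisely what makes the infimum of area in this class positive and prevents the minimization from degenerating onto a lower-dimensional object or collapsing through the boundary.

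For existence, I would set up a Douglas--Courant style problem: minimize the Dirichlet energy over pairs $(h, j)$ consisting of a map $h: N \to M$ in the prescribed relative homotopy class and a conformal structure $j$ on $N$, with $h|_{\partial N} = \mathrm{id}$. The weak mean-convexity of each face $\Lambda_i$ serves as a barrier, via Theorem~\ref{maximum-principle} applied inside the smooth extensions $\tilde \Lambda_i \subset \tilde M$: no minimizing sequence can push area across $\partial M$, since the $\tilde \Lambda_i$ act as one-sided minimal-type obstacles. Standard compactness for energy-minimizing maps (Morrey, Schoen--Yau) then produces a smooth weakly conformal harmonic limit $f$ whose image is a branched minimal immersion. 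The incompressibility assumption rules out loss of the homotopy class through bubbling off of spheres, and the second hypothesis --- that each $\Lambda_i$ extends smoothly past $\partial M$ into $\tilde M$ --- allows a reflection argument to give boundary regularity of $f$.

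The main obstacle, and the substance of Meeks--Yau's contribution, is showing that the minimizer $f$ is actually an \emph{embedding} (and in particular has no interior branch points). This is the ``exchange and round-off'' trick: if $f(N)$ has two sheets crossing transversely along a curve $\Delta$, one cuts both sheets along $\Delta$ and reglues them in the other way to produce two pieces whose total area equals that of $f(N)$; a small smoothing along $\Delta$ then strictly decreases area on at least one piece, and one checks that the resulting surface lies in the same relative isotopy class of the inclusion, contradicting the area-minimality of $f$. Branch points are excluded by a similar local surgery, using the mean-convex barriers $\tilde \Lambda_i$ to keep the competitor inside $M$. Stability of the resulting embedded minimal surface is immediate from area-minimality in its isotopy class, so $f:N\to M$ is the required stable minimal embedding with $f(\partial N)=\partial N$.
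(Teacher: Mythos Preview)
This theorem is not proved in the paper at all: it is quoted verbatim as Theorem~5 of Meeks--Yau \cite{meeks-yau} and used as a black box to solve the Plateau problems in the regions $N_{n,\epsilon}$. So there is no ``paper's own proof'' to compare your proposal against. Your sketch is, in outline, a fair summary of what Meeks and Yau actually do in \cite{meeks-yau}: minimize area in the relative homotopy class, use the mean-convex faces $\Lambda_i$ (extended to $\tilde\Lambda_i$ in $\tilde M$) as barriers to keep competitors inside $M$, and then promote the branched minimal immersion to an embedding by a cut-and-paste exchange that strictly decreases area.

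One substantive omission in your outline is the \emph{tower construction}, which is really the heart of the Meeks--Yau argument for embeddedness. The naive exchange you describe (cut two transverse sheets along their intersection curve and reglue) does not by itself show that the resulting competitor stays in the same relative homotopy class, nor does it handle non-transverse self-intersections or branch points cleanly. Meeks and Yau instead lift the problem through a tower of two-fold covers: at each stage the self-intersection set simplifies, and at the top of the tower the lifted surface is embedded; one then pushes the embedding back down, using the area-minimizing property at each level to rule out folding. Without this tower step your argument has a genuine gap, since the local surgery alone does not control the global homotopy type of the competitor. If you want to present this as a proof rather than a citation, that is the missing idea you would need to supply.
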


The stable minimal surface constructed in this theorem is, in fact, area-minimizing, as shown in the proof in \cite{meeks-yau}.  With the help of this theorem, we will construct a sequence of
embedded minimal disks that are smooth in the interior.  Then we
will apply the following result of Hardt and Simon to prove
regularity of the boundary (Theorem 11.1 in \cite{hardt-simon}):

\begin{theorem}\label{hardt-simon}
Let $U$ be an area-minimizing surface and $\partial U$ be a
connected oriented embedded $C^{1,\alpha}$ curve.  Then $U$ is a
connected embedded $C^{1,\alpha}$ hypersurface with boundary.
\end{theorem}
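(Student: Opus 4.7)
The plan is to establish boundary regularity for $U$ via a blow-up analysis at each boundary point, combined with a classification of tangent cones at the boundary.

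First, note that interior regularity for $2$-dimensional area-minimizing surfaces in a $3$-dimensional ambient is standard (there are no interior singularities in these dimensions by the classical theorem of Fleming), so the whole question reduces to points of $\partial U$. Fix $p\in\partial U$. Using a boundary version of the monotonicity formula, the rescalings $U_r=(U-p)/r$ have locally uniformly bounded density, so along some $r_j\to 0$ they subconverge (as currents or varifolds) to a tangent cone $C$ at $p$. This $C$ is area-minimizing in $\R^3$ with boundary equal to the tangent line $T_p\partial U$, and the $C^{1,\alpha}$ regularity of $\partial U$ enters as input data for the scaling.

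Second, I would classify such tangent cones. Any area-minimizing $2$-dimensional cone in $\R^3$ whose boundary is a single straight line must consist of finitely many half-planes meeting along that line. Since $U$ is embedded and $\partial U$ is a connected oriented embedded curve, the density of $U$ at a boundary point is exactly $1/2$; hence the cone $C$ must be a single half-plane with multiplicity one.

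Third, with this cone identified, the argument proceeds by a boundary $\varepsilon$-regularity theorem in the spirit of Allard: once $U$ is sufficiently close in flat norm at some small scale to a half-plane with $C^{1,\alpha}$ boundary, it is itself a $C^{1,\alpha}$ graph over that half-plane on a comparable scale. This simultaneously yields uniqueness of the tangent cone at $p$ and a $C^{1,\alpha}$ parametrization in a neighborhood of $p$. Varying $p$ along $\partial U$ and gluing with interior regularity gives the global conclusion, while connectedness of $U$ follows from connectedness of $\partial U$ together with the local graph structure.

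The main obstacle is the boundary $\varepsilon$-regularity step together with the uniqueness of the tangent cone: passing from ``some subsequential tangent cone is a half-plane'' to ``$U$ is a $C^{1,\alpha}$ graph near $p$'' is precisely where the delicate geometric-measure-theoretic estimates of Hardt and Simon enter. The classification of cones is comparatively easy once one knows the density is $1/2$, but controlling the convergence rate of the blow-ups and upgrading flatness to graph regularity at the boundary requires the bulk of the technical work.
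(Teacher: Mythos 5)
First, a point of orientation: the paper does not prove this statement at all --- it is quoted verbatim as Theorem 11.1 of Hardt--Simon \cite{hardt-simon} and used as a black box, so there is no internal proof to compare against; I am judging your sketch against the actual Hardt--Simon argument. Your skeleton (interior regularity by Fleming, boundary monotonicity and blow-up, classification of boundary tangent cones, then a boundary $\varepsilon$-regularity/excess-decay theorem to get uniqueness of the tangent cone and the $C^{1,\alpha}$ graph structure) is the right architecture, and you correctly locate much of the technical weight in the last step.

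However, there is a genuine gap at the pivotal point. You write that ``since $U$ is embedded and $\partial U$ is a connected oriented embedded curve, the density of $U$ at a boundary point is exactly $1/2$.'' Embeddedness of $U$ is the \emph{conclusion} of the theorem, not a hypothesis --- $U$ is only assumed to be an area-minimizing current/surface --- so this step is circular. Worse, even granting that the support is nice away from $\partial U$, the a priori possibility that the density at a boundary point equals $m+\tfrac12$ with $m\ge 1$ (the minimizer ``passing through'' the boundary curve, with the multiplicity jumping across it) cannot be dismissed by connectedness of $\partial U$ alone; ruling this out, or showing that it still yields a smooth support, is precisely the heart of Hardt--Simon. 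Their proof hinges on the decomposition of a codimension-one minimizing current into a nested sum of multiplicity-one minimizing boundaries, interior regularity applied to the boundary-free summands, the strong maximum principle to control how these sheets can touch, and finally an open-and-closed argument along the connected curve $\partial U$ to pin down the density globally. None of this appears in your sketch. A smaller but real inaccuracy: an area-minimizing cone in $\mathbf{R}^3$ with boundary a line is \emph{not} a union of several half-planes meeting at angles along that line (two or more distinct half-planes forming a corner are never minimizing); the correct a priori list is a single half-plane plus an integer multiple of the plane containing it, which is exactly the higher-density scenario you need to confront. With the density claim unjustified, the subsequent $\varepsilon$-regularity step has no admissible starting point, so the proof as written does not go through.
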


This construction gives us half of the cylinders $\Sigma_n$.  Then,
we will use the Schwarz reflection principle (Lemma 7.3 in
\cite{osserman}) to reflect them across the axes $\gamma_1$ and
$\gamma_2$:

\begin{theorem}\label{reflection-principle}
Suppose $\Sigma$ is a minimal surface in a $3$-manifold $M$ whose
boundary contains a geodesic segment $\gamma$.  Suppose
additionally that there exists an isometry $G$ in $M$ whose fixed
points include $\gamma$.  Then $\Sigma$ can be extended to a
minimal surface $\Sigma\cup G(\Sigma)$ symmetric across $\gamma$.
\end{theorem}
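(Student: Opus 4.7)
The plan is to show that $\Sigma \cup G(\Sigma)$ is a smooth minimal surface in $M$. Since $G$ is an isometry, $G(\Sigma)$ is automatically minimal (the mean curvature is intertwined with $dG$, which is orthogonal). Away from $\gamma$ the two sheets are either disjoint or coincide on open pieces, so the only nontrivial task is to verify smoothness along the interface $\gamma$.

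First I would establish $C^1$ matching by a differential computation. At a point $p \in \gamma$, the map $dG_p$ is an orthogonal transformation that fixes $T_p\gamma$ pointwise, hence restricts to an orthogonal transformation of $(T_p\gamma)^{\perp}$. In the case of interest (in particular, for the $\pi$-rotations about the polar axes $\gamma_1,\gamma_2$ used in Section~\ref{sect2}), $G$ is an involution, so $dG_p|_{(T_p\gamma)^\perp} = -\mathrm{id}$. Since $T_p\Sigma$ contains $T_p\gamma$, it has the form $\mathrm{span}(T_p\gamma, n)$ for some unit $n \in (T_p\gamma)^\perp$, and so
\[
dG_p(T_p\Sigma) \;=\; \mathrm{span}(T_p\gamma, -n) \;=\; T_p\Sigma.
\]
Therefore $T_p G(\Sigma) = T_p\Sigma$ for every $p \in \gamma$, giving $C^1$ matching of the two sheets across $\gamma$.

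To upgrade $C^1$ matching to full smoothness I would use a conformal parametrization together with a reflection argument for an elliptic system. Near $p \in \gamma$, parametrize $\Sigma$ by a conformal minimal immersion $f: D^+ \to M$, where $D^+$ is the upper half-disk and the diameter $(-1,1)$ is mapped into $\gamma$; conformality and minimality together say precisely that $f$ is a harmonic map into $(M,g^2)$. Define
\[
\tilde f(\zeta) \;=\; \begin{cases} f(\zeta), & \mathrm{Im}\,\zeta \geq 0, \\ G(f(\bar\zeta)), & \mathrm{Im}\,\zeta \leq 0. \end{cases}
\]
This is continuous on the real axis because $G$ fixes $\gamma$, and the tangent-plane computation of the previous paragraph gives $C^1$ matching. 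On each open half-disk $\tilde f$ is harmonic (for the lower half, because the composition of a harmonic map with an isometry is harmonic), hence $\tilde f$ is a weak harmonic map on all of $D$.

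The most delicate step, and the one I expect to be the main obstacle, is the regularity of $\tilde f$ across the real axis. I would handle this by working in Fermi coordinates $(s, v_1, v_2)$ along $\gamma$ in which $G$ acts as $(s, v_1, v_2) \mapsto (s, -v_1, -v_2)$ and in which the ambient metric is invariant under this involution (possible because $\gamma$ is a geodesic and $G$ an isometry). In these coordinates the harmonic map system for the components of $f$ separates into an equation for the even component $s$ and equations for the odd components $v_1, v_2$, and the nonlinear terms (Christoffel symbols of the ambient metric) respect these parities. Consequently the reflected map satisfies the same semilinear elliptic system on the full disk $D$, and standard interior regularity gives $\tilde f \in C^\infty(D)$. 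Its image $\tilde f(D) = \Sigma \cup G(\Sigma)$ is then the desired smooth $G$-invariant minimal extension of $\Sigma$.
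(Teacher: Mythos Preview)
The paper does not actually prove this statement; it is quoted as Lemma~7.3 of Osserman and used as a black box. So there is no ``paper's own proof'' to compare against beyond the classical Euclidean argument that Osserman gives, which uses that in $\mathbf{R}^3$ the coordinate functions of a conformal minimal immersion are harmonic and then invokes the ordinary Schwarz reflection principle for harmonic functions.

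Your argument is the natural Riemannian generalization of that: you replace ``harmonic coordinate functions'' by ``harmonic map into $(M,g)$'' and replace the classical reflection by the isometry $G$. The key steps --- that $dG_p$ fixes $T_p\gamma$ and, for a nontrivial involution with one-dimensional fixed set, acts as $-\mathrm{id}$ on $(T_p\gamma)^\perp$, hence preserves $T_p\Sigma$; that the reflected map is $C^1$ across the axis; and that a $C^1$ weak solution of the harmonic-map system is smooth by bootstrapping --- are all sound. Two small caveats worth making explicit: first, you are tacitly assuming $\Sigma$ is $C^1$ up to the geodesic boundary $\gamma$ (otherwise the tangent-plane matching does not even get started; in the paper this is supplied by the Hardt--Simon boundary regularity, Theorem~\ref{hardt-simon}); second, the theorem as stated only asks that the fixed set \emph{include} $\gamma$, whereas your argument uses that $G$ is an involution whose fixed set near $\gamma$ is exactly $\gamma$. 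That restriction is harmless here since the only isometry used in the paper is the $\pi$-rotation $R_\pi$, but strictly speaking you have proved a slightly narrower (and more honestly formulated) statement than the one written.
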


We recall that a codimension one {\it lamination} on a $3$-manifold $M^3$ is a collection $\mathcal{L}$ of smooth disjoint surfaces (called leaves) such that $\bigcup_{\Lambda\in\mathcal{L}}\Lambda$ is closed, and such that for each $x\in M$ there exists an open neighborhood $U$ of $x$ and a coordinate chart $(U,\Phi)$ with $\Phi(U)\subset\R^3$ so that in these coordinates the leaves in $\mathcal{L}$ pass through $\Phi(U)$ in slices of the form $(\R\times\{t\})\cap\Phi(U)$.  A {\it minimal lamination} is a lamination whose leaves are minimal.  Note that any (compact) embedded surface is a lamination.  When proving the convergence of our sequence, we will need the following proposition (Proposition B.1 in \cite{colding-minicozzi3}):

\begin{proposition} \label{minimal-lamination}
Let $M^3$ be a fixed $3$-manifold.  If $\mathcal{L}_i\subset B_{2R}(x)\subset M$ is a sequence of minimal laminations with uniformly bounded curvatures (where each leaf has boundary contained in $\partial B_{2R}(x)$), then a subsequence $\mathcal{L}_j$ converges in the $C^{\alpha}$ topology for any $\alpha<1$ to a (Lipschitz) lamination $\mathcal{L}$ in $B_R(x)$ with minimal leaves.
\end{proposition}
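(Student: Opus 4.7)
The plan is to produce the limit lamination via a local graphical parametrization combined with a diagonal Hausdorff/$C^{1,\alpha}$ subsequence argument. Write $K$ for the uniform bound on the second fundamental forms of the leaves of the $\mathcal{L}_i$.

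First I would establish a uniform graphical scale $r_0=r_0(K,M)>0$. The standard interior estimate for surfaces with $|A|\le K$ shows that for any $p$ lying on a leaf $\Lambda$ of $\mathcal{L}_i$, the component of $\Lambda\cap B_{r_0}(p)$ containing $p$ is the graph, in suitable geodesic normal coordinates, over a disk in $T_p\Lambda$ of a function with $C^2$-norm controlled by $K$. Integrating $|A|\le K$ along leaves gives a uniform tilt estimate that forces every other leaf of $\mathcal{L}_i$ meeting $B_{r_0/2}(p)$ to be a graph over the same plane with $C^2$-bounded graph function; since distinct leaves of a lamination are disjoint, these graph functions are linearly ordered by their value at $p$.

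Next I would fix a countable dense set $\{p_k\}\subset B_R(x)$ and, for each $k$, consider the ordered family $u^{i,k}_\bullet$ of graph functions describing the leaves of $\mathcal{L}_i$ that meet $B_{r_0/2}(p_k)$. Indexing by the signed ``height'' above $p_k$ produces a closed subset $H_{i,k}\subset[-r_0/2,r_0/2]$. Passing to a subsequence one may assume $H_{i,k}\to H_{\infty,k}$ in Hausdorff distance; by the $C^2$ bound and Arzel\`a--Ascoli one may further assume that whenever $h_i\in H_{i,k}$ with $h_i\to h\in H_{\infty,k}$, the corresponding graphs converge in $C^{1,\alpha}$ to some $u^{\infty,k}_h$. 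A diagonal argument in $k$ produces one subsequence $\mathcal{L}_j$ that has this convergence at every base point simultaneously.

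It remains to check that the local data $\{u^{\infty,k}_h\}_{h\in H_{\infty,k}}$ assemble to a minimal lamination $\mathcal{L}$ on $B_R(x)$. Each $u^{\infty,k}_h$ is a $C^{1,\alpha}$ weak solution of the minimal surface equation and hence is smooth and minimal by elliptic regularity. Distinct limit graphs in a common chart are disjoint: they arise as limits of an ordered family, so their limits are weakly ordered, and if two touched at an interior point the strong maximum principle (Theorem~\ref{maximum-principle}) would force them to coincide. Closedness of $H_{\infty,k}$ together with the $C^{1,\alpha}$ bound guarantees that the union of these graphs is closed in the chart, giving the foliation-chart condition required of a lamination. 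The main technical obstacle I anticipate is precisely the global gluing: verifying that limit graphs obtained near $p_k$ and near $p_{k'}$ agree on overlaps so as to extend to globally defined maximal leaves. Here again the strong maximum principle is decisive, since two smooth minimal pieces that arise as $C^{1,\alpha}$ limits of the same sequence of leaves of $\mathcal{L}_j$ and that share even a single point must coincide on the component of their intersection; once this compatibility is verified, the $C^{\alpha}$ convergence claimed in the proposition is inherited directly from the local $C^{1,\alpha}$ Arzel\`a--Ascoli estimate.
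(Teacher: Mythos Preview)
The paper does not give its own proof of this proposition: it is quoted verbatim as Proposition~B.1 of \cite{colding-minicozzi3} and used as a black box (notably inside the proof of Lemma~\ref{lamination-lemma}). So there is nothing in this paper to compare your argument against.

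That said, your sketch is essentially the standard proof of this compactness statement, and is in the same spirit as the argument in the appendix of \cite{colding-minicozzi3}: uniform $|A|$ bounds give a uniform graphical radius, local graphs have uniform $C^{1,1}$ (and, via the minimal surface equation and Schauder, $C^{2,\alpha}$) control, Arzel\`a--Ascoli plus a diagonal argument over a countable dense set yields local limit graphs, and the strong maximum principle both keeps distinct limit graphs disjoint and glues overlapping charts. Two small remarks. First, invoking Theorem~\ref{maximum-principle} (the Solomon--White varifold maximum principle) is heavier machinery than needed; for smooth minimal graphs the classical strong maximum principle for quasilinear elliptic equations suffices. Second, your claim of a direct ``$C^2$ bound'' from $|A|\le K$ alone is slightly imprecise: the curvature bound gives graphicality and a Lipschitz bound on the gradient, and one then feeds this into the minimal surface equation to upgrade to $C^{2,\alpha}$ estimates, which is what ultimately justifies the $C^\alpha$ convergence of the lamination structure asserted in the proposition.
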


\section{The sequence of embedded minimal surfaces $\{\Sigma_n\}$}
\label{sect2}

We consider $\Omega'$ the universal cover of
$\Omega\backslash(\gamma_1\cup\gamma_2)$.  We can parametrize this
universal cover as $\Omega'=[0,\pi]\times\R\times(-\pi,\pi)$.  In these
coordinates, the metric in $\Omega'$ is given by the same expression as
the original metric in $M$.  The topological boundary of $\Omega'$ is
$\partial\Omega'=\{\phi=0\}\cup\{\phi=\pi\}\cup\{z=\pi\}\cup\{z=-\pi\}$,
and the two pieces $\{z=\pi\}$ and $\{z=-\pi\}$ are mean-convex.

The main difficulty in applying
Theorem~\ref{plateau-existence-theorem} is the portion of the boundary
corresponding to $\gamma_1$ and $\gamma_2$, as the warped product metric
in $\Omega'$ becomes degenerate as we approach these axes.  In order to
apply Meeks-Yau, we will remove small
$\epsilon$-tubes around $\gamma_1$ and $\gamma_2$, then smoothly glue
together these new ends.  The resulting manifold will be
topologically a solid torus and will only have mean-convex
boundary.  Thus, we can solve Plateau problems in a sequence of
these manifolds with $\epsilon\rightarrow 0$.  Subsets of the
area-minimizing solutions in this sequence can be isometrically embedded in
$\Omega'$, and projecting back to $\Omega$ we can get a subsequence converging to an
embedded solution to the original Plateau problem in $\Omega'$.

The precise construction is as follows.

For each $\epsilon>0$, let $\Omega'_{\epsilon}$ be the universal
cover of $\Omega$ minus the $\epsilon$-tubes around $\gamma_1$ and
$\gamma_2$.
Thus,$$\Omega'_{\epsilon}=(\epsilon,\pi-\epsilon)\times\R\times(-\pi,\pi)\subset\Omega'.$$
We can isometrically embed $\Omega'_{\epsilon}$ into the manifold
$N_{\epsilon}=\mathbf{S}^1\times\R\times[-\pi,\pi]$ endowed with the metric
$$g_{\epsilon}^2 = \omega^2(z)(d\phi^2 + \alpha_\epsilon^2(\phi)\;d\theta^2)+dz^2,$$ where $\alpha_\epsilon:\S^1=[0,\pi)\rightarrow\R$ is a smooth function satisfying:
\begin{enumerate}
 \item $\alpha_\epsilon(\phi)=\sin(\phi)$ for $\phi\in[\epsilon,\pi-\epsilon]$.
 \item $\alpha_\epsilon'(\phi)>0$ for $0<\phi<\frac{\pi}{2}$ and $\alpha_\epsilon'(\phi)<0$ for $\frac{\pi}{2}<\phi<\pi$.
 \item $\alpha_\epsilon(\phi)>0$ for all $\phi\in\S^1$.
\end{enumerate}
Conceptually, we imagine $\alpha_\epsilon(\phi)$ as a smooth version of
$f(\phi) = \max\{\sin(\phi), \sin(\epsilon)\}$.  This metric glues
together the $\epsilon$-tubes around $\gamma_1$ and $\gamma_2$.

We next construct embedded minimal surfaces in
$N_{n,\epsilon}=\mathbf{S}^1\times[-n\pi,n\pi]\times[-\pi,\pi]\subset
N_{\epsilon}$ with boundary $\gamma_3=\{(\phi,-n\pi,-\pi)\}$ and
$\gamma_4=\{(\phi,n\pi,\pi)\}$ using
Theorem~\ref{plateau-existence-theorem}.  To do this, we first
need to show that the boundary of $N_{n,\epsilon}$ is mean convex:

\begin{lemma}
The boundary $\partial N_{n,\epsilon}$ satisfies the hypothesis of Theorem~\ref{plateau-existence-theorem}.  In particular, it is contained in the union
of a finite number of minimal surfaces that meet at an angle
of $\frac{\pi}{2}$.
\end{lemma}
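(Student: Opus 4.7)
The boundary $\partial N_{n,\epsilon}$ decomposes into four smooth pieces, the two $z$-faces $\Lambda_z^{\pm} = \S^1\times[-n\pi,n\pi]\times\{\pm\pi\}$ and the two $\theta$-faces $\Lambda_\theta^{\pm} = \S^1\times\{\pm n\pi\}\times[-\pi,\pi]$ (there is no $\phi$-face because $\phi\in\S^1$). My plan is to verify, piece by piece: (i) each face is minimal, hence weakly mean-convex; (ii) adjacent faces meet along corner circles at angle $\pi/2$; and (iii) each face extends to a smooth surface $\tilde\Lambda$ inside the ambient manifold $\tilde M = N_\epsilon$ with $\tilde\Lambda\cap N_{n,\epsilon}$ equal to the face and $\partial\tilde\Lambda\subset\partial N_\epsilon$.

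For the $z$-faces, the mean-curvature computation already performed in the proof of Lemma~\ref{no-closed-minimal-surfaces} gives $H_{\{z=a\}}=2\omega'(a)/\omega(a)$ with respect to the unit normal in the direction $-\partial_z$. Since $z=\pi$ is a critical point of $\omega$ by assumption (iv) and $z=-\pi$ is a critical point by the evenness assumption (ii), both $z$-faces have mean curvature zero, i.e., are minimal.

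For the $\theta$-faces, I would use that the coefficients of $g_\epsilon^2$ depend only on $\phi$ and $z$ and that the metric has no $d\theta$-cross-term, so for any $c\in\R$ the reflection $\theta\mapsto 2c-\theta$ is an isometry of $N_\epsilon$ whose fixed-point set is exactly $\{\theta=c\}$. Hence each $\theta$-face is totally geodesic, in particular minimal.

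At each corner circle $\{z=\pm\pi,\,\theta=\pm n\pi\}$ the outward conormals to the two adjacent faces point along $\partial_z$ and $\partial_\theta$ respectively; since $g_\epsilon(\partial_z,\partial_\theta)=0$ these directions are orthogonal, so the faces meet at angle $\pi/2$. For the extension condition I take $\tilde M = N_\epsilon$: each $z$-face extends to the full level set $\S^1\times\R\times\{\pm\pi\}$, a smooth closed surface in $N_\epsilon$ with empty boundary and $\tilde\Lambda\cap N_{n,\epsilon}=\Lambda_z^{\pm}$; each $\theta$-face already has its topological boundary $\S^1\times\{\pm n\pi\}\times\{-\pi,\pi\}$ sitting inside $\partial N_\epsilon=\S^1\times\R\times\{-\pi,\pi\}$, so we may simply take $\tilde\Lambda=\Lambda_\theta^{\pm}$. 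This verifies every hypothesis of Theorem~\ref{plateau-existence-theorem}. The only step that requires any thought is the minimality of the $\theta$-faces, and even there the $\theta$-reflection isometry of $g_\epsilon^2$ makes it immediate; everything else is bookkeeping.
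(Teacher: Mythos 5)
Your proposal is correct and its overall architecture matches the paper's proof: same decomposition of $\partial N_{n,\epsilon}$ into two $z$-faces and two $\theta$-faces, the same mean-curvature computation (recycled from Lemma~\ref{no-closed-minimal-surfaces}) showing the $z$-faces are minimal because $\omega'(\pm\pi)=0$, and the same orthogonality check $g_\epsilon(\partial_z,\partial_\theta)=0$ at the corner circles. The one place you genuinely diverge is the minimality of the $\theta$-faces: the paper extends the unit normal $\frac{1}{\omega(z)\alpha_\epsilon(\phi)}\partial_\theta$ to a vector field and computes that its divergence vanishes because $\det g_\epsilon$ is independent of $\theta$, whereas you observe that $\theta\mapsto 2c-\theta$ is an isometry of $(N_\epsilon,g_\epsilon)$ (no $d\theta$ cross-terms, $\theta$-independent coefficients) with fixed-point set $\{\theta=c\}$, so each $\theta$-face is totally geodesic. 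Your argument is cleaner and gives more (totally geodesic rather than merely minimal), at the cost of invoking the standard fact about fixed-point sets of isometries; the paper's computation is more pedestrian but uniform with its treatment of the $z$-faces. You are also somewhat more explicit about hypothesis (ii) of Theorem~\ref{plateau-existence-theorem}, naming $\tilde M=N_\epsilon$ and exhibiting the extensions $\tilde\Lambda_i$ concretely, where the paper deduces the extension property from the orthogonal intersection; both suffice.
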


\begin{proof}
 First, observe that the boundary of $N_{n,\epsilon}$ consists of the
 ``horizontal'' surfaces $\Lambda_1=\{z=-\pi\}$ and $\Lambda_2=\{z=\pi\}$
 and the ``vertical'' surfaces $\Lambda_3=\{\theta=-n\pi\}$ and
 $\Lambda_4=\{\theta=n\pi\}$, and neither the two vertical pieces nor the two
horizontal pieces intersect.  Therefore, to satisfy the
intersection requirement, we only need to look at the intersection
of a horizontal surface and a vertical surface.

Let $x\in\Lambda_1\cap\Lambda_3$.  Then $\frac{\partial}{\partial\phi}\in T_x(\Lambda_1\cap\Lambda_3)$, so the angle between the surfaces is the angle between $\frac{\partial}{\partial\theta}\in T_x(\Lambda_1)$ and $\frac{\partial}{\partial z}\in T_x(\Lambda_3)$.  Direct computation shows that these two vectors are orthogonal, and so the surfaces meet orthogonally.  The same argument shows that the other three pairs of surfaces also intersect orthogonally.  In particular, this shows that if we embed $N_{n,\epsilon}$ in a larger manifold and extend each $\Lambda_i$, the extension will lie outside of $N_{n,\epsilon}$, as required in (ii) in Theorem~\ref{plateau-existence-theorem}.

We will next show that the horizontal surfaces $\Lambda_1$ and
$\Lambda_2$ are minimal and thus mean convex.

The unit normal vector of $\Lambda_1$ that points towards the
interior of $N_{n,\epsilon}$ is
$\vec{n}_{\Lambda_1}=\frac{\partial}{\partial z}$.  We can extend
this vector to the coordinate vector field
$X=\frac{\partial}{\partial z}$ on $N_{n,\epsilon}$, and then we
can compute the mean curvature vector of $\Lambda_1$ with respect to $\vec{n}_{\Lambda_1}$ as:
 \begin{eqnarray}
  H_{\Lambda_1} & = & -\frac{1}{2}\text{div}_{N_{n,\epsilon}}X \nonumber \\
   & = & \left.-\frac{1}{2}\frac{\partial}{\partial z}\right|_{z=-\pi}\ln({\text{det}g_{\epsilon}}) \nonumber \\
   & = & \left.-\frac{1}{2}\frac{\partial}{\partial z}\right|_{z=-\pi}(\ln({\omega^4(z)\alpha^2(\phi)})) \nonumber \\
   & = & -2\frac{\omega'(-\pi)}{\omega(-\pi)}=0 \nonumber
 \end{eqnarray}
 This shows that $\Lambda_1$ is minimal in $N_{n,\epsilon}$, and in
 particular it is mean convex.  A similar argument shows that $\Lambda_2$ is mean convex.

 Finally, we will show that the vertical surfaces $\Lambda_3$ and $\Lambda_4$ are minimal in $N_{n,\epsilon}$.
 The inward-pointing unit normal of $\Lambda_3$ is $\vec{n}_{\Lambda_3}=\frac{1}{\omega(z)\alpha(\phi)}\frac{\partial}{\partial\theta}$, which can be extended to a vector field $X=\frac{1}{\omega(z)\alpha(\phi)}\frac{\partial}{\partial\theta}$ in all $N_{n,\epsilon}$.  Then the mean curvature vector of $\Lambda_3$ with respect to $\vec{n}_{\Lambda_3}$ is:
 \begin{eqnarray}
  H_{\Lambda_3} & = & -\frac{1}{2}\text{div}_{N_{n,\epsilon}}X \nonumber \\
   & = & \left.-\frac{1}{2}\frac{\partial}{\partial\theta}\right|_{\theta=-n\pi}\ln({\text{det}g_{\epsilon}})=0 \nonumber
 \end{eqnarray}
 since the determinant of the metric does not depend on the $\theta$
 coordinate.  This proves that $\Lambda_3$ is minimal, and a similar
 calculation shows that $\Lambda_4$ is also minimal.

Therefore, the first hypothesis in Theorem~\ref{plateau-existence-theorem} is also satisfied.
\end{proof}

Consider the circles $\gamma_3 = \{\theta=-n\pi, z=-\pi\}$ and $\gamma_4 =
\{\theta=n\pi,z=\pi\}$: they bound an annulus in
$\partial N_{n,\epsilon}$.  Moreover, any homotopically nontrivial curve in that annulus is also homotopically nontrivial in $N_{n,\epsilon}$, so by
Theorem~\ref{plateau-existence-theorem}, there exists a smoothly
embedded area-minimizing annulus $\Sigma''_{n,\epsilon}$ in
$N_{n,\epsilon}$ whose boundary is $\gamma_3\cup\gamma_4$.  These
surfaces have the following non-intersection property:

\begin{lemma}\label{intersection-lemma}
Let $R_{\theta_0}:N_{\epsilon}\to N_{\epsilon}$ be the
isometry $\theta\to\theta+\theta_0$, where $\theta_0\neq 0$.  Then
$$R_{\theta_0}(\Sigma''_{n,\epsilon})\cap \Sigma''_{n,\epsilon}=\emptyset.$$
\end{lemma}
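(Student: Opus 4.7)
The plan is to run a standard sliding argument on the one-parameter family of isometric copies $R_t(\Sigma''_{n,\epsilon})$, $t\in\R$, and to contradict the existence of any interior contact via Theorem~\ref{maximum-principle}. Suppose for contradiction that the two surfaces meet for some $\theta_0>0$ (the case $\theta_0<0$ is symmetric). Since $\Sigma''_{n,\epsilon}\subset N_{n,\epsilon}$ has $\theta$-range contained in $[-n\pi,n\pi]$, the rotated copy $R_t(\Sigma''_{n,\epsilon})$ lies in $\{-n\pi+t\le\theta\le n\pi+t\}$, so the two surfaces are disjoint whenever $t>2n\pi$. I would therefore define
\[
t^* \;=\; \sup\{\,t\ge\theta_0 : R_t(\Sigma''_{n,\epsilon})\cap\Sigma''_{n,\epsilon}\neq\emptyset\,\},
\]
and a routine compactness argument (using that $\Sigma''_{n,\epsilon}$ is compact and $R_t$ varies continuously in $t$) yields $\theta_0\le t^*\le 2n\pi$ together with a contact point $p\in R_{t^*}(\Sigma''_{n,\epsilon})\cap\Sigma''_{n,\epsilon}$. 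By maximality of $t^*$, a neighborhood of $p$ in $R_{t^*}(\Sigma''_{n,\epsilon})$ lies weakly on one side of $\Sigma''_{n,\epsilon}$.

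The step I expect to be the main obstacle is verifying that $p$ lies in the relative interior of both surfaces, which is what is needed for Theorem~\ref{maximum-principle} to apply. The boundary circles of $\Sigma''_{n,\epsilon}$ sit in $\{z=\pm\pi\}$, and the preceding lemma shows that those sheets are themselves minimal in $N_\epsilon$. The strong maximum principle therefore pushes the interior of $\Sigma''_{n,\epsilon}$ strictly into $\{-\pi<z<\pi\}$, and the same holds for $R_{t^*}(\Sigma''_{n,\epsilon})$. Consequently a boundary point of one surface (which is forced to lie on $\{z=\pm\pi\}$) cannot coincide with an interior point of the other. Moreover, because $t^*>0$ strictly shifts the $\theta$-coordinate, the boundary circles of $\Sigma''_{n,\epsilon}$ and those of $R_{t^*}(\Sigma''_{n,\epsilon})$ are disjoint, so $p$ cannot be a common boundary point either. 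Hence $p$ is interior to both, as required.

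Applying Theorem~\ref{maximum-principle} to the two smooth connected minimal surfaces $\Sigma''_{n,\epsilon}$ and $R_{t^*}(\Sigma''_{n,\epsilon})$ at the interior one-sided contact point $p$ then forces $R_{t^*}(\Sigma''_{n,\epsilon})=\Sigma''_{n,\epsilon}$, which in turn requires the boundary circle $\gamma_3=\{\theta=-n\pi,z=-\pi\}$ to be mapped by $R_{t^*}$ to a boundary circle of $\Sigma''_{n,\epsilon}$. But $R_{t^*}(\gamma_3)=\{\theta=-n\pi+t^*,z=-\pi\}$ agrees with neither $\gamma_3$ (since $t^*>0$) nor $\gamma_4$ (which sits in $\{z=\pi\}$), and this contradiction closes the argument.
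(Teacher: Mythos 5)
Your proposal is correct and follows essentially the same route as the paper: a sliding argument over the family $R_t(\Sigma''_{n,\epsilon})$, taking the supremum of translation parameters with nonempty intersection and invoking the Solomon--White maximum principle (Theorem~\ref{maximum-principle}) at the resulting one-sided contact to force coincidence, which is incompatible with the displaced boundary circles. You in fact supply slightly more detail than the paper does, notably the verification that the contact point is interior to both surfaces and the explicit boundary-circle contradiction.
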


\begin{proof}[Proof of Lemma~\ref{intersection-lemma}]
For $\theta$ big enough (for instance, $\theta>2n\pi$), it is clear that $R_{\theta}(\Sigma''_{n,\epsilon})\cap \Sigma''_{n,\epsilon}=\emptyset$.  Let $$\theta_0=\sup\{\theta>0:R_{\theta}(\Sigma''_{n,\epsilon})\cap \Sigma''_{n,\epsilon}\neq\emptyset\}.$$ Then $R_{\theta_0}(\Sigma''_{n,\epsilon})\cap\Sigma''_{n,\epsilon}\neq\emptyset$ and $R_{\theta_0}(\Sigma''_{n,\epsilon})$ lies on one side of $\Sigma''_{n,\epsilon}$.  By the maximum principle, Theorem~\ref{maximum-principle}, this implies that $R_{\theta_0}(\Sigma''_{n,\epsilon})=\Sigma''_{n,\epsilon}$, which means $\theta_0=0$.  Therefore, $R_{\theta}(\Sigma''_{n,\epsilon})\cap \Sigma''_{n,\epsilon}=\emptyset$ for all $\theta>0$.  The same argument applies to $\theta<0$.

\end{proof}

Define $\Sigma'_{n,\epsilon}$ to be the portion of $\Sigma''_{n,\epsilon}$
contained in $\Omega'_{\epsilon}$.  This discards the portion of our
minimal surfaces that lies inside of the $\epsilon$-tubes, where the metric
in $N_{n,\epsilon}$ is different than in $\Omega'$, the universal cover of
$\Omega\backslash(\gamma_1\cup\gamma_2)$.  Then $\Sigma'_{n,\epsilon}$ is
an area-minimizing surface with boundary consisting of
$\gamma_3\cup\gamma_4\cup\delta^1_{n,\epsilon}\cup\delta^2_{n,\epsilon}$,
where each $\delta^i_{n,\epsilon}$ is some curve on the
cylinder of radius $\epsilon$ centered on the axes $\gamma_i$.
Let $\Sigma_{n,\epsilon}\subset\Omega$ be the projection of
$\Sigma'_{n,\epsilon}\subset\Omega'$ from the universal cover to the
original manifold $\Omega\subset\mathbf{S}^2\times\mathbf{S}^1$.

By Lemma~\ref{intersection-lemma}, $\Sigma''_{n,\epsilon}$ does
not intersect any of its $\theta$-translations.  Because a
self-intersection of $\Sigma_{n,\epsilon}$ corresponds to the
intersection of $\Sigma'_{n,\epsilon}\subset\Sigma''_{n,\epsilon}$
with one of its $\theta$-translations, $\Sigma_{n,\epsilon}$ must
be embedded in $\Omega$.  More strongly, the intersection of
$\Sigma_{n,\epsilon}$ with a plane $\{\phi=\mathrm{const}\}$ must be
monotone in the vertical coordinate $z$.

At this point in our construction, the surfaces $\Sigma_{n,\epsilon}$ are
essentially embedded minimal surfaces in $\Omega$ that wrap $n$ times
around $\epsilon$-tubes centered on the axes $\gamma_1$ and $\gamma_2$ and
whose boundary is the union
of fixed geodesics $\gamma_3$, $\gamma_4$, and some curves contained in these
$\epsilon$-tubes.  The next step is to
let $\epsilon\to 0$ to create a sequence of minimal surfaces $\tilde{\Sigma}_n$
that wrap $n$ times around the axes $\gamma_1$ and $\gamma_2$ and whose
boundaries are $\cup_{i=1}^4 \gamma_i$.  These surfaces are essentially
``half-helicoids'' which we will extend via Schwarz reflection
(Theorem~\ref{reflection-principle}) into our
final sequence $\Sigma_n$.

Specifically, to construct each $\tilde{\Sigma}_n$, we need area and curvature
bounds on $\Sigma_{n,\epsilon}$ independent of $\epsilon$.  First we
obtain a curvature bound: let
$\Omega_{\epsilon_0}$ be the complement of the
$\epsilon_0$-neighborhood of $\cup_{i=1}^4 \gamma_i$.  Clearly,
$\cup_{\epsilon_0>0}\Omega_{\epsilon_0}=\Omega\backslash(\cup_{i=1}^4
\gamma_i)$.  Fix $\epsilon_0>0$.  Because each $\Sigma_{n,\epsilon}$ is
isometric to a subset of the stable minimal surfaces
$\Sigma''_{n,\epsilon}$, we have the interior curvature bound
$$\sup_{\Omega_{\epsilon_0}}|A(\Sigma_{n,\epsilon})|^2 \leq
C(\epsilon_0),$$ for any $\epsilon<\epsilon_0/2$ and some constant
$C(\epsilon_0)$ dependent only on $\epsilon_0$.

Similarly, we can obtain a uniform area bound for $\Sigma_{n,\epsilon}\cap\Omega_{\epsilon_0}$.  Each
$\Sigma_{n,\epsilon}$ is the projection of an area-minimizing surface
$\Sigma'_{n,\epsilon}\subset\Omega'$.  As before, let
$\Omega'_{\epsilon_0}$ be the universal cover of $\Omega_{\epsilon_0}$ (or
equivalently, $\Omega'_{\epsilon_0}$ is the complement in $\Omega'$ of the
$\epsilon_0$-neighborhoof of $\partial\Omega'$).  If we construct a
surface consisting of a fixed surface
$\Sigma'_{n,\epsilon_0}\cap\Omega'_{\epsilon_0}$ plus the piece of
$\partial\Omega'_{\epsilon_0}$ between
$\partial\Sigma'_{n,\epsilon}\cap\Omega'_{\epsilon_0}$ and
$\partial\Sigma'_{n,\epsilon_0}\cap\Omega'_{\epsilon_0}$, then this new
surface has bigger area than
$\Sigma'_{n,\epsilon}\cap\Omega'_{\epsilon_0}$.  We can then bound
uniformly the area of $\Sigma'_{n,\epsilon}\cap\Omega'_{\epsilon_0}$ by
the area of $\Sigma'_{n,\epsilon_0}\cap\Omega'_{\epsilon_0}$ plus the area
of $\partial\Omega'_{\epsilon_0}$.  Now since the area of the projection
$\Sigma_{n,\epsilon}$ is the same as the area of $\Sigma'_{n,\epsilon}$
(because $\Omega$ and $\Omega'$ are isometric and $\Sigma_{n,\epsilon}$ is
embedded, that is, there is no self-intersections), this gives us a bound
of the area of $\Sigma_{n,\epsilon}$ uniform in $\epsilon$.

This area bound combined with the previous interior curvature bound
implies that for any fixed $\epsilon_0>0$, we can find a sequence of
$\Sigma_{n,\epsilon}$ that converges when $\epsilon\to 0$ to a locally area-minimizing embedded
surface in $\Omega_{\epsilon_0}$.  By letting $\epsilon_0\to 0$
and taking a diagonal sequence, we can extract a subsequence
$\{\Sigma_{n,\epsilon_j}\}$ that converges to an embedded minimal
surface $\tilde\Sigma_n$ in
$\Omega\backslash(\cup_{i=1}^4\gamma_i)$.  Additionally,
$\partial\Sigma_{n,\epsilon_j}$ converges pointwise to
$\cup_{i=1}^4\gamma_i$.

So far, we have only shown that $\tilde\Sigma_n$ is embedded away from
$\gamma_1$ and $\gamma_2$.  However, consider the portion of
$\partial\Sigma_{n,\epsilon_j}$ that lies on the cylinders
$\{\phi=\epsilon_j,\pi-\epsilon_j\}$.  By Lemma~\ref{intersection-lemma},
the projection of these portions onto $\gamma_1$ and $\gamma_2$ must be
monotone (otherwise, $\Sigma_{n,\epsilon_j}'$ would intersect one of its
$\theta$-translations; see Figure~\ref{picJK}).  In particular, this
implies that $\tilde\Sigma_n$ is embedded up to the boundary
$\cup_{i=1}^4\gamma_i$.

\begin{figure}[ht]
 \begin{center}
 \includegraphics[width=3in]{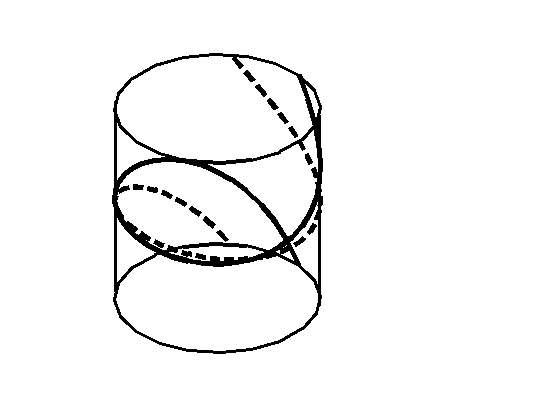}
 \includegraphics[width=2.5in]{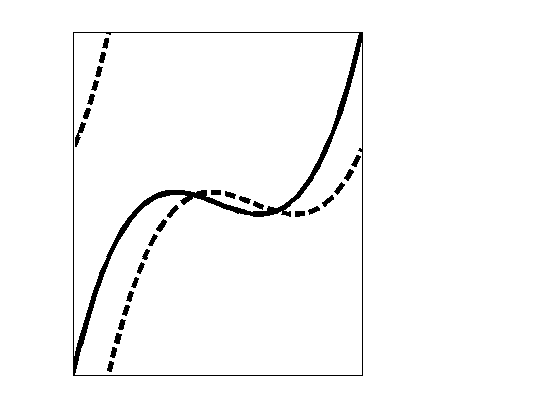}
 \caption[Monotonicity of the boundary curve.]{If the boundary curve $\Sigma_{n,\epsilon_j}\cap\{\phi=\epsilon_j,\pi-\epsilon_j\}$ is not monotone, then it intersects its $\theta$-translation, that is, its rotation around the polar axes.}\label{picJK}
\end{center}
\end{figure}

To prove that this embedding is non-singular at a point
$x\in\gamma_1\cup\gamma_2$, choose a small intrinsic neighborhood
$U\subset\tilde\Sigma_n$ of $x$.  As $\tilde\Sigma_n$ is the limit of the
area-minimizing surfaces $\Sigma_{n,\epsilon}$, $U$ will
be area-minimizing provided that it is sufficiently small.  We can also
choose $U$ to have smooth boundary.  Therefore, we can apply
Theorem~\ref{hardt-simon} to show that $U$ is $C^{1,\alpha}$ up to
$\partial U\cap\{\gamma_1\cup\gamma_2\}$.

Thus, the surfaces $\tilde{\Sigma}_n$ have boundary on
$\{z=\pm\pi\}\cup\gamma_1\cup\gamma_2$.  The final step is to use
Theorem~\ref{reflection-principle} to reflect them across the axes
$\gamma_1$ and $\gamma_2$.  In this case the isometry $G$ is the
rotation $R_\pi:\theta\mapsto\theta+\pi$ around the polar axes.
The new surfaces $\Sigma_n$ are cylinders in $\Omega$ with
boundary in $\partial\Omega=\{z=-\pi\}\cup\{z=\pi\}$.  Observe
that $\Sigma_n\backslash(\gamma_1\cup\gamma_2)=\tilde{\Sigma}_n\cup
G(\tilde{\Sigma}_n)$, and when we lift $G(\tilde{\Sigma}_n)$ to
$\Omega'$ we obtain just a $\theta$-translation of $\Sigma'_n$.  By
Lemma~\ref{intersection-lemma}, we can show that
$\tilde{\Sigma}_n\cap G(\tilde{\Sigma}_n)\subset\gamma_1\cup\gamma_2$, and
therefore each $\Sigma_n$ is a smooth embedded minimal surface in
$\Omega$.

\vskip6mm
\section{The limit of the sequence $\Sigma_n$} \label{sect3}

We will now finish the proof of Theorem~\ref{thm1} by showing that
the sequence $\Sigma_n$ converges smoothly to a minimal lamination
away from the axes $\gamma_1$ and $\gamma_2$ and that the limit
lamination has two helicoid-like singularities on the strictly
stable $2$-sphere $\Gamma$.

We say that a sequence of surfaces $\Sigma_n\subset M$ is
uniformly locally simply connected (ULSC) if for each $x\in M$,
there exists a constant $r_0>0$ (depending on $x$), so that for
all $r\leq r_0$ and every surface $\Sigma_n$ each connected
component of $B_r(x)\cap\Sigma_n$ is a disk.

The existence of the limit lamination is given by the following
lemma (see \cite{colding-minicozzi3}, Lemma II.1.2 in \cite{colding-minicozzi10} and Theorem 4.2 in \cite{colding-minicozzi11}):

\begin{lemma}\label{lamination-lemma}
Let $M$ be a compact $3$-dimensional manifold with boundary and
let $\Sigma_n$ be a sequence of compact embedded minimal surfaces
in $M$ with $\partial\Sigma_n\subset\partial M$.  Let
$$S = \left\{ x \in M : \sup_{B_r(x)\cap\Sigma_j}|A|^2\to\infty
\mbox{ for all } r>0\right\}$$
and suppose the sequence is ULSC in a neighborhood
of $S$.  Then after passing to a subsequence,
$\mathring{\Sigma}_n\backslash S$ will converge on compact subsets to a
lamination $\mathcal{L}\subset \mathring{M}\backslash S$ with minimal
leaves.
\end{lemma}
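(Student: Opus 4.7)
The plan is to reduce to the local compactness result in Proposition~\ref{minimal-lamination} via an exhaustion of $\mathring{M}\backslash S$ by relatively compact subsets on which the second fundamental form is uniformly bounded, and then to diagonalize.

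First, I unravel the definition of $S$. By contrapositive, for each $x\in\mathring{M}\backslash S$ there exist $r(x)>0$, a constant $C(x)$, and a subsequence along which
$$\sup_{B_{r(x)}(x)\cap\Sigma_n}|A|^2\le C(x).$$
A covering argument then shows that on any compact set $K\subset\mathring{M}\backslash S$ the curvatures of $\Sigma_n$ are uniformly bounded for all sufficiently large $n$, along a subsequence depending on $K$.

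Next, I choose an exhaustion $K_1\subset K_2\subset\cdots$ of $\mathring{M}\backslash S$ by compact sets and, for each $\ell$, cover $K_\ell$ by finitely many balls $B_{r_i/2}(x_i)$ on which some subsequence has uniformly bounded second fundamental form. Because each embedded surface $\Sigma_n$ is itself a minimal lamination, I apply Proposition~\ref{minimal-lamination} on each such ball, taking a common subsequential refinement over the finite subcover, to obtain $C^\alpha$ convergence on $K_\ell$ to a Lipschitz minimal lamination. A Cantor diagonalization in $\ell$ produces a single subsequence $\Sigma_{n_k}$ converging on every compact subset of $\mathring{M}\backslash S$ to a candidate limit $\mathcal{L}$ whose local pieces are minimal laminations.

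To see that $\mathcal{L}$ itself is a genuine lamination, I verify that the local pieces agree on overlaps. Uniqueness of $C^\alpha$ limits gives set-theoretic consistency, and the strong maximum principle (Theorem~\ref{maximum-principle}) forces any two limit leaves meeting at an interior point to coincide, so leaves are either disjoint or equal locally; the required local product structure is then inherited from each ball.

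The main delicate point, and the one I expect to require the most care, is the behavior of the local limits as one approaches $S$. Without control on the topology of the local sheets there, the leaves could in principle fold or spiral in ways that prevent the local laminations from glueing into a lamination on all of $\mathring{M}\backslash S$. The ULSC hypothesis is exactly what rules this out: it forces components of $B_r(x)\cap\Sigma_n$ to be disks for small $r$ in a neighborhood of $S$, so the limit sheets have the graph structure needed to assemble a genuine lamination up to, but not including, the singular set. Away from a neighborhood of $S$, the compactness is a direct application of Proposition~\ref{minimal-lamination} and requires no further input.
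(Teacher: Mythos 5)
Your proof is correct and follows essentially the same route as the paper's: cover each compact subset of $\mathring{M}\backslash S$ by finitely many balls on which the definition of $S$ yields uniform curvature bounds, apply Proposition~\ref{minimal-lamination} on each, and diagonalize. One quibble with your final paragraph: the lamination structure on $\mathring{M}\backslash S$ already follows from the local curvature bounds away from $S$, so the ULSC hypothesis is not actually needed for this conclusion --- it is invoked elsewhere in the paper to control the sequence near the singular set itself.
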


\begin{proof}
For each compact subset $K\in M\backslash S$, there is an open covering of
$K$ by finitely many balls where the curvatures of the $\Sigma_j$'s are
bounded uniformly in $j$ in the concentric double balls (by the definition
of $S$).  The claim then follows from Proposition~\ref{minimal-lamination}
and a diagonal argument.
\end{proof}

To apply Lemma~\ref{lamination-lemma} to our case, we need to show
that $\{\Sigma_n\}$ is ULSC in a neighborhood of the singular set
$S$.  Since the ``half-helicoids'' $\tilde{\Sigma}_n$ are stable,
we have uniform curvature bounds on $\Sigma_n$ depending only on the
distance to the boundary and the axes $\gamma_1$ and $\gamma_2$.
By slightly shrinking $\Omega$, we can obtain uniform
curvature bounds away from the polar axes.  Therefore the singular
set $S$ must be contained in $\gamma_1\cup\gamma_2$.

A point $y$ in the singular set $S$ is not ULSC if and only if the
injectivity radius on the sequence go to $0$ at that point, that is, there
is a sequence of points $y_n\in\Sigma_n$ with $y_n\to y$ and $\mathrm{inj}(y_n)\to
0$ (see Part IV in \cite{colding-minicozzi10}, Proposition I.0.19 in
\cite{colding-minicozzi8}).  If $d_{\Sigma_n}(y_n,y)$ is bounded away from
zero (ie, the sequence $y_n$ converges to $y$ extrinsically but not
intrinsically), then standard curvature bounds for stable minimal surfaces
(see, for example, Theorem 2.7 of \cite{colding-minicozzi})
imply that the injectivity radius of $y_n$ is bounded away from zero.
Therefore, we only need to prove a lower bound on injectivity radius for
all points in an intrinsic neighborhood of the axes.

Consider an intrinsic $r$-neighborhood of the axis
$D_r^n(\gamma_1)=\{x\in\Sigma_n:d_{\Sigma_n}(x,\gamma_1)<r\}.$  We
can write  $D^n_r(\gamma_1)=\tilde{D}^n_r(\gamma_1)\cup
G(\tilde{D}^n_r(\gamma_1))$, where $\tilde{D}^n_r(\gamma_1)$ is
the corresponding $r$-neighborhood of $\gamma_1$ in
$\tilde{\Sigma}_n$, and $\tilde{D}^n_r(\gamma_1)\cap
G(\tilde{D}^n_r(\gamma_1))=\gamma_1$.  Since both
$\tilde{\Sigma}_n$ and $G(\tilde{\Sigma}_n)$ are disks, $\tilde{D}^n_r(\gamma_1)\backslash\gamma_1$ and
$G(\tilde{D}^n_r(\gamma_1))\backslash\gamma_1$ are disjoint disks, and
therefore $D^n_r(\gamma_1)$ is topologically a disk.  Thus, in smaller
neighborhood $D_{\frac{r}{2}}(\gamma_1)$, the injectivity radius is
bounded below by $\frac{r}{2}$, and this implies that the sequence is ULSC
in the axis $\gamma_1$.  The same argument applies to $\gamma_2$.

Once we have the minimal lamination $\mathcal{L}$, there are only
two possibilities for each leaf; either the leaf is a closed
surface or it spirals infinitely into a closed surface.  The only
closed surface in $\Omega$ is the strictly stable $2$-sphere
$\Gamma$, and so our lamination will consist of $\Gamma$ and two
minimal surfaces that spiral into it.

\begin{theorem}\label{proper-theorem}
After passing to a subsequence, the sequence $\{\Sigma_n\}$
converges to a lamination $\mathcal{L}$ of minimal surfaces
consisting of the strictly stable $2$-sphere $\Gamma$ and two
embedded minimal surfaces that lie on opposite sides of and spiral
into $\Gamma$.
\end{theorem}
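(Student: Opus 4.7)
The plan is to apply Lemma~\ref{lamination-lemma} to extract a limit lamination, use the Solomon-White maximum principle (Theorem~\ref{maximum-principle}) together with Lemma~\ref{no-closed-minimal-surfaces} to identify its leaves, and finally exploit the cylindrical construction of $\Sigma_n$ together with Lemma~\ref{intersection-lemma} to show that exactly one nonclosed leaf appears on each side of $\Gamma$. The preceding discussion has already verified that $\{\Sigma_n\}$ is ULSC and that the curvature-blowup set $S$ lies in $\gamma_1\cup\gamma_2$, so Lemma~\ref{lamination-lemma} yields, after passing to a subsequence, a minimal lamination $\mathcal{L}$ of $\mathring\Omega\setminus(\gamma_1\cup\gamma_2)$ to which the $\Sigma_n$ converge smoothly on compact subsets.

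I then show that $\Gamma\setminus(\gamma_1\cup\gamma_2)$ is a leaf of $\mathcal{L}$. Each cylinder $\Sigma_n$ is connected with boundary components on both $\{z=\pi\}$ and $\{z=-\pi\}$, so by continuity of $z$ it must meet $\Gamma=\{z=0\}$; because $\Sigma_n$ wraps through every longitude, for each $p\in\Gamma\setminus(\gamma_1\cup\gamma_2)$ there is a sequence of intersections $p_n\in\Sigma_n\cap\Gamma$ with $p_n\to p$. Therefore $\Gamma\setminus(\gamma_1\cup\gamma_2)$ is contained in the closed set $\bigcup_{L\in\mathcal{L}}L$. The leaves of $\mathcal{L}$ are pairwise disjoint and locally graphical, so any leaf $L$ meeting $\Gamma$ at $p$ must be tangent to $\Gamma$ there (else nearby parallel leaves would force $\Gamma$ to exit the lamination), and then Theorem~\ref{maximum-principle} applied to two tangent minimal surfaces forces $L=\Gamma$ in a neighborhood of $p$. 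By connectedness, $\Gamma\setminus(\gamma_1\cup\gamma_2)$ is a single leaf.

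By Lemma~\ref{no-closed-minimal-surfaces}, $\Gamma$ is the only closed minimal surface in $\Omega$, so every other leaf $L'\in\mathcal{L}$ is noncompact. The closure of a lamination is a union of leaves, so $\overline{L'}\setminus L'$ consists of leaves as well. The strict mean convexity of the slices $\Gamma_a$ for $0<|a|<\pi$ pointing toward $\Gamma$, established in the proof of Lemma~\ref{no-closed-minimal-surfaces}, combined with Theorem~\ref{maximum-principle} prevents $L'$ from touching $\partial\Omega$ or straddling $\Gamma$. Thus $L'$ is confined to one component of $\Omega\setminus\Gamma$, and its accumulation can only be the unique closed leaf $\Gamma$; in other words, $L'$ spirals into $\Gamma$ from one side.

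Finally, exactly one such noncompact leaf exists on each side. By Lemma~\ref{intersection-lemma}, the intersection of $\Sigma_n$ with any meridian $\{\phi=\mathrm{const}\}$ is monotone in $z$, and each piece $\Sigma_n\cap\{z>0\}$ is connected, so its smooth subsequential limit on the top side is a single connected embedded minimal surface $\Sigma^+$; symmetrically $\Sigma^-$ on the bottom. A hypothetical second noncompact leaf on the top side would be disjoint from and nested with $\Sigma^+$, but then the monotone sheets of $\Sigma_n$ approximating the two leaves would lift in $\Omega'$ to sheets that collide with a nontrivial $\theta$-translate, contradicting Lemma~\ref{intersection-lemma}. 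Hence $\mathcal{L}=\{\Gamma,\Sigma^+,\Sigma^-\}$ as claimed. I expect this last uniqueness step to be the main technical obstacle, since ruling out additional leaves requires the lifted structure of the $\Sigma_n$ in $\Omega'$ and the monotonicity of Lemma~\ref{intersection-lemma} to be leveraged carefully.
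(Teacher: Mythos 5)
Your skeleton (lamination lemma, then identify the leaves, then count them) matches the paper's, but the two middle steps have genuine gaps, and the single device the paper uses to close both of them is absent from your argument. First, your route to ``$\Gamma$ is a leaf'' begins with the claim that every $p\in\Gamma\setminus(\gamma_1\cup\gamma_2)$ is a limit of points $p_n\in\Sigma_n\cap\Gamma$. That is false: by the very monotonicity in $z$ of $\Sigma_n\cap\{\phi=\mathrm{const}\}$ that you invoke later (a consequence of Lemma~\ref{intersection-lemma}), each half-helicoid crosses $\{z=0\}$ essentially once per meridian, so $\Sigma_n\cap\Gamma$ is roughly a single great circle through the poles, not a dense subset of the $2$-sphere $\Gamma$. (The correct statement is that points of $\Sigma_n$ with $z\to 0$ accumulate on all of $\Gamma$.) Even after that repair, the next step does not go through: a leaf $L$ through $p\in\Gamma$ need not be tangent to $\Gamma$ there --- ``nearby parallel leaves would force $\Gamma$ to exit the lamination'' is not an argument, since $\Gamma$ is not yet known to be a union of leaves and a transversal crossing is not excluded by the lamination structure --- and even granting tangency, Theorem~\ref{maximum-principle} requires one surface to lie weakly on one side of the other, which you have not established; tangent minimal surfaces that cross are not ruled out by the maximum principle.

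Second, the assertion that a noncompact leaf $L'$ can only accumulate on the closed leaf $\Gamma$ does not follow from ``the closure of a lamination is a union of leaves'' together with Lemma~\ref{no-closed-minimal-surfaces}: a priori $L'$ could accumulate on other noncompact leaves (or an exceptional minimal set), about which Lemma~\ref{no-closed-minimal-surfaces} says nothing. The paper resolves both issues at once by associating to an arbitrary leaf $\Sigma$ the set $\{(\phi,\theta,z): z=\inf_{(\phi,\theta,\zeta)\in\Sigma}\zeta\}$; closedness of the lamination makes this a leaf which is a closed minimal graph over $\S^2$, hence equal to $\Gamma$ by Lemma~\ref{no-closed-minimal-surfaces}. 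This simultaneously shows $\Gamma\in\mathcal{L}$, confines every other leaf to one side of $\Gamma$, and identifies its accumulation set --- the steps your proposal leaves open. Your final counting step is also unconvincing as written: two disjoint nested leaves on one side of $\Gamma$ are perfectly compatible with Lemma~\ref{intersection-lemma} (after all, $\Gamma$, $\Sigma^+$ and $\Sigma^-$ are themselves disjoint nested limits of the embedded $\Sigma_n$), so ``the approximating sheets would collide with a $\theta$-translate'' needs an actual mechanism; the paper instead counts leaves using the connectedness of each $\Sigma_n$ together with the fact that no leaf of $\mathcal{L}$ can cross $\Gamma$.
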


\begin{proof}
By Lemma~\ref{lamination-lemma}, a subsequence of $\{\Sigma_n\}$
will converge to a minimal lamination $\mathcal{L}$ away from
$\gamma_1\cup\gamma_2$.  Let $\Sigma$ be one of the leaves of
$\mathcal{L}$.  Define the set $\{(\phi,\theta,z): z =
\inf_{(\phi,\theta,\zeta)\in\Sigma}\zeta\}$.  Because $\Sigma$ is
embedded and $\mathcal{L}$ is a (closed) lamination, this set
defines a leaf of $\mathcal{L}$ which is a closed minimal graph over $\Gamma$ in $M$.  By
Lemma~\ref{no-closed-minimal-surfaces}, there are no non-trivial
minimal graphs over $\Gamma$ in $\Omega$, which implies that
$\Sigma=\Gamma$ or that $\Sigma$ must infinitely spiral into
$\Gamma$.  In either case, this shows that $\Gamma\in\mathcal{L}$.

Because each $\Sigma_n$ has boundary in both boundary components
of $\Omega$, the lamination $\mathcal{L}$ must contain at least
one leaf that does not equal to $\Gamma$ and therefore must spiral
into $\Gamma$.  Since each $\Sigma_n$ is connected and crosses
$\Gamma$ but no leaf of $\mathcal{L}$ can cross $\Gamma$, there
must be exactly two such spiraling surfaces, one on each side of
$\Gamma$.
\end{proof}

Theorem~\ref{thm1} now follows immediately.

\begin{proof}[Proof of Theorem~\ref{thm1}]
We can calculate the scalar curvature of a warped product on $M$
as
$$\mathrm{Scal}_M=-2\frac{\omega''}{\omega}+\frac{1-(\omega')^2}{\omega^2}.$$
In particular, we can choose $\omega\in C^{\infty}(\mathbf{S}^1)$
satisfying the four properties listed before and with positive
scalar curvature.  An example of such a function would be
$$\omega(z)=-\frac{1}{4}\cos z+\frac{5}{4}.$$
\end{proof}

\end{document}